\title{Some examples of extremal triples of commuting contractions}
\author{Edward J. Timko \\ Indiana University Department of Mathematics}
 \newcommand{\CC}[0]{\mathbb{C}}  % complex
 \newcommand{\NN}[0]{\mathbb{N}}  % natural numbers
\newcommand{\inner}[2]{ \langle #1 , #2 \rangle} %inner prod.
\DeclareMathOperator{\Ker}{Ker}
\DeclareMathOperator{\Ran}{Ran}
\DeclareMathOperator{\ran}{ran}
\DeclareMathOperator{\diag}{diag}
\DeclareMathOperator{\Span}{Span}
\newcommand{\pd}{\partial}
\newcommand{\sbse}{\subseteq}
\newcommand{\frk}[1]{\mathfrak{#1}} % Fraktur text
\newcommand{\cc}[1]{\overline{#1}} %Complex conjugate
\newcommand{\qtextq}[1]{\quad\text{#1}\quad} %space-text-space
\newcommand{\pvect}[1]{\begin{pmatrix} #1 \end{pmatrix}} %vector with ( )
\newcommand{\of}{\circ}
\newcommand{\mc}[1]{\mathcal{#1}} % Math calligraphy
\theoremstyle{plain}
\newtheorem{theorem}{Theorem}[section]
\newtheorem{lemma}[theorem]{Lemma}
\newtheorem{proposition}[theorem]{Proposition}
\theoremstyle{remark}
\newtheorem{remark}[theorem]{Remark}
\numberwithin{equation}{section} % numbering based on section
\begin{document}

\maketitle

\begin{abstract}
    The collection $\frk{C}_3$ of all triples of commuting contractions forms a family in the sense of Agler, and so has an ``optimal'' model $\pd\frk{C}_3$ generated by its extremal elements. A given $T\in\frk{C}_3$ is extremal if every $X\in\frk{C}_3$ extending $T$ is an extension by direct sum. We show that many of the known examples of triples in $\frk{C}_3$ that fail to have coisometric extensions are in fact extremal.
\end{abstract}

\section{Introduction}

Given $n\in\NN$, we denote by $\frk{C}_n$ the class of all $n$-tuples of commuting contractions. Observe that $\frk{C}_n$ is a \textit{family} in the sense of Alger \cite{Agler1}, which is to say that:
      \begin{enumerate}[{(}i)]
          \item $\frk{C}_n$ is closed with respect to direct sums. That is, given $A^{(j)}\in\frk{C}_n$ for every $j\in J$, we have $\left(\bigoplus_{j\in J} A^{(j)}_i\right)_{i=1}^n \in\frk{C}_n$;
          \item Given $A\in\frk{C}_n$ and a unital $*$-representation $\pi$ of the unital C$^*$-algebra generated by $A_1,\dots, A_n$, then $(\pi(A_i))_{i=1}^n \in \frk{C}_n$; and
          \item $\frk{C}_n$ is hereditary. That is, if $A\in\frk{C}_n$ and if $\mc{M}$ is an invariant subspace of $\mc{H}$ for $A_1,\dots ,A_n$, then $(A_i|\mc{M})_{i=1}^n\in\frk{C}_n$.
      \end{enumerate}
An element $T\in\frk{C}_n$ is said to be \textit{extremal} if whenever $S\in\frk{C}_n$ is an extension of $T$, then $S$ is an extension by direct sum. That is, if $\mc{N}$ is invariant for $S$ so $T=S|\mc{N}$, then $\mc{N}$ is a reducing subspace for $S$. We say that $S$ is a \textit{trivial} extension of $T$ if $S$ is an extension of $T$ by direct sum.

Let $\mc{B}\sbse\frk{C}_n$. We say that $\mc{B}$ is a \textit{model} for $\frk{C}_n$ if
 \begin{enumerate}[{(}i)]
     \item $\mc{B}$ is closed with respect to direct sums and unitary $*$-representations; and
     \item given $T\in\frk{C}_n$ acting a Hilbert space $\mc{H}$, there exists $S\in\mc{B}$ having $\mc{H}$ as an invariant subspace so that $S|\mc{H}=T$.
 \end{enumerate}
Lastly, the \textit{boundary} of $\frk{C}_n$, denoted by $\pd\frk{C}_n$, is the smallest model for $\frk{C}_n$. It follows as a consequence of Proposition 5.9 and 5.10 in \cite{Agler1} that this family always exists and is generated by the extremal elements of $\frk{C}_n$.

\vspace{10pt}

In the case that $n=1,2$, the boundary $\pd\frk{C}_n$ consists of all tuples of commuting coisometries, a consequence of the work of Sz.-Nagy for $n=1$ and And\^{o} for $n=2$ \cite{SzNagy1}. For $n>2$ this characterization is no longer valid.  It may well be the case that no concrete description of the extremal elements of $\frk{C}_3$ or $\pd\frk{C}_3$ is possible. We show that many of the known examples of triples in $\frk{C}_3$ that fail to have coisometric extensions are in fact extremal.

\vspace{10pt}

Agler's theory has seen some application. Dritschel and McCullough show in \cite{DritMcCu1} that if $\mc{F}$ denotes the family of contractive hyponormal operators, then $\pd \mc{F}=\mc{F}$. In the same article, sufficient conditions are given for an $n$-hyponormal operator to be extremal. In an article by Curto and Lee \cite{CurtoLee}, it is shown that a weakly subnormal operator satisfying the conditions of \cite{DritMcCu1} must be normal and so extremal for the collection of all weakly subnormal operators. Dritschel, McCullough, and Woerdeman \cite{DritMcCuWo} give a collection of equivalent conditions for a $\rho$-contraction (for $\rho\leq 2$) to be extremal, ultimately showing for $\rho\in(0,1)\cup(1,2]$ that $\mc{C}_\rho=\pd\mc{C}_\rho$, with $\mc{C}_\rho$ denoting the class of $\rho$-contractions. In another article by Dritschel and McCullough \cite{DritMcCu2} it is shown that a family, in ``Agler's sense'', of representations of either an operator algebra or an operator space has boundary representations, as related to the non-commutative Shilov boundary. Finally, in \cite{RichSund} Richter and Sundberg apply Agler's theory to the study of row contractions and spherical contractions.

\vspace{10pt}

Here is an outline of the material found in this paper. In Section 2 we make some observations that apply to any $n$-tuple of commuting contractions. While these results are only applied in Section 5, they are general enough to merit separate exposition. In Section 3 we study an $n$-tuple of Parrot \cite{Par1}, finding the $n$-tuple to be extremal if and only if a certain subspace is trivial. In Section 4 we prove a triple of Crabb and Davie \cite[p.~23]{Pisier} is extremal in $\frk{C}_3$. In Section 5 we examine a triple due to Varopoulos \cite[p.~86]{Pisier} and show that this triple is extremal only for a relatively narrow range of parameters.

\vspace{10pt}

We comment on another triple that has appeared in the literature. In \cite{LotSte} Lotto and Steger find a triple of commuting, diagonalizable contractions that fail to obey the von Neumann inequality. This triple does not appear to produce extremal elements so its examination has been omitted from this paper.

\vspace{10pt}
Before closing this section, I would like to thank Hari Bercovici for his guidance and the helpful criticism he provided in the preparation of this paper, and to thank the referee for his helpful suggestions.

\section{Some General Remarks}

Lacking a complete description of the boundary elements, we develop some tools to tell us when we may determine when certain elements are not extremal. For the first lemma, we use the notation $\Ran T:=\bigvee_i \ran T_i$ and $\Ker T:=\bigcap_i \ker T_i$.

\begin{lemma}\label{GR:L1}
 Let $T\in\frk{C}_n$ operating on a Hilbert space $\mc{H}$. If $(\Ran T)^\bot\cap\Ker T\neq \{0\}$, then $T$ is not extremal.
\end{lemma}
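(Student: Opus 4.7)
The plan is to construct an explicit nontrivial extension of $T$ by enlarging $\mc{H}$ by one dimension and letting the extra coordinate be sent into the special subspace $(\Ran T)^\bot \cap \Ker T$ by one of the $S_i$'s. Pick a unit vector $v \in (\Ran T)^\bot \cap \Ker T$, set $\mc{K} = \mc{H} \oplus \CC$, and let $e$ denote the unit vector spanning the $\CC$-summand. With respect to this decomposition, define
\[
S_1 = \pvect{T_1 & v \\ 0 & 0}, \qquad S_i = \pvect{T_i & 0 \\ 0 & 0} \text{ for } 2 \leq i \leq n,
\]
where the top-right entry of $S_1$ is to be read as the rank-one operator $\alpha \mapsto \alpha v$ from $\CC$ to $\mc{H}$.

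Next I would verify that $S = (S_1,\dots,S_n) \in \frk{C}_n$ and extends $T$. Invariance of $\mc{H}$ under each $S_i$ and the identity $S_i|\mc{H} = T_i$ are immediate from the block form. For commutativity, compute
\[
S_1 S_i - S_i S_1 = \pvect{T_1 T_i - T_i T_1 & -T_i v \\ 0 & 0} = 0
\]
for $i \geq 2$, since $T$ is commuting and $v \in \Ker T$; the pairs with $i,j \geq 2$ commute trivially. For contractivity, note that for $h \in \mc{H}$ and $\alpha \in \CC$, the fact that $v \perp \Ran T$ gives
\[
\|S_1(h + \alpha e)\|^2 = \|T_1 h\|^2 + |\alpha|^2 \|v\|^2 \leq \|h\|^2 + |\alpha|^2 = \|h + \alpha e\|^2,
\]
and the bound for $i \geq 2$ is even easier.

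Finally, $\mc{H}$ fails to reduce $S$: the computation $S_1^* e = v \in \mc{H}$ shows that $\mc{H}^\bot = \CC e$ is not invariant under $S_1^*$, since $v \neq 0$. Thus $S$ is an extension of $T$ that is not an extension by direct sum, so $T$ is not extremal.

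There is no real obstacle here; the only point worth flagging is the role of the two hypotheses on $v$: membership in $\Ker T$ is what forces $T_i v = 0$ and hence commutativity of the extension, while membership in $(\Ran T)^\bot$ is what supplies the orthogonality needed to keep $S_1$ a contraction. Without both properties simultaneously available one would have to take a larger ambient space and work harder.
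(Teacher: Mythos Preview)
Your approach is essentially the paper's: build a block upper-triangular extension whose off-diagonal piece lands in $(\Ran T)^\bot\cap\Ker T$, then use membership in $\Ker T$ for commutativity and orthogonality to $\Ran T$ for contractivity. The paper extends by the full subspace $\mc{E}=(\Ran T)^\bot\cap\Ker T$ and places the inclusion $V:\mc{E}\hookrightarrow\mc{H}$ in the top-right corner of \emph{every} $X_i$, while you extend by a single copy of $\CC$ and perturb only $S_1$; both variants work and the checks are identical in spirit.

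One slip in the last step: the claim $S_1^* e = v$ is false. From the block form one has $S_1^*=\pvect{T_1^* & 0 \\ v^* & 0}$, so $S_1^* e = 0$. What you actually want is either $S_1 e = v \in \mc{H}\setminus\{0\}$, which shows $\mc{H}^\bot=\CC e$ is not invariant under $S_1$, or equivalently $S_1^* v = (T_1^* v,\,\|v\|^2)\notin\mc{H}$, which shows $\mc{H}$ is not invariant under $S_1^*$. Either observation gives that $\mc{H}$ fails to reduce $S_1$, and with that correction your argument is complete.
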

\begin{proof}
 Let $\mc{E}:=(\Ran T)^\bot\cap\Ker T$ and $V:\mc{E}\to\mc{H}$ the inclusion map. Define $X$ on $\mc{H}\oplus\mc{E}$ by
 \[ X_i:=\pvect{T_i & V\\ 0 & 0} \quad i=0,1,\dots,n. \]
 As $T_iV=0$ for each $i$, the $X_i$ commute. Since $VV^*$ is orthogonal to the range of each $T_i$, it follows that $T_iT_i^*+VV^*\leq 1$, and therefore each $X_i$ is a contraction. Since $\mc{E}\neq 0$, $X_i$ is a non-trivial extension.
\end{proof}

\begin{lemma}\label{GR:L2}
 If $T\in\frk{C}_n$ satisfies $\min_i\|T_i\|<1$, then $T$ is not extremal.
\end{lemma}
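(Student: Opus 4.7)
My plan is to construct an explicit non-trivial extension of $T$. Fix an index $j$ with $\|T_j\|<1$ and set $t=\sqrt{1-\|T_j\|^2}>0$. On $\mc{H}\oplus\mc{H}$, define
\[
X_j=\pvect{T_j & tI \\ 0 & 0},\qquad X_i=\pvect{T_i & 0 \\ 0 & T_i}\ \text{for } i\neq j.
\]
The first summand is invariant under every $X_i$ with restriction $T_i$, so $X$ extends $T$; and $X_j$ sends $0\oplus h$ to $th\oplus 0\in\mc{H}$, so $\mc{H}^\perp$ is not invariant under $X_j$ and the extension is non-trivial.

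The verifications of commutativity and contractivity are short. Pairs $X_iX_l$ with $i,l\neq j$ commute because they are coordinate-wise direct sums of commuting operators. For the mixed pairs, both $X_jX_i$ and $X_iX_j$ come out to $\pvect{T_jT_i & tT_i \\ 0 & 0}$; the point is that the scalar $tI$ intertwines with every $T_i$, and the lower-right $T_i$ in $X_i$ supplies the matching off-diagonal term. For contractivity, $X_i$ with $i\neq j$ is a direct sum of contractions, and $X_jX_j^*=(T_jT_j^*+t^2I)\oplus 0$ is at most $I$ because $T_jT_j^*\leq \|T_j\|^2 I$ and $t^2=1-\|T_j\|^2$.

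I foresee no real obstacle. The one point worth flagging is that Lemma~\ref{GR:L1} does not apply here directly, since $\|T_j\|<1$ does not force $(\Ran T)^\perp\cap\Ker T$ to be nonzero, so a separate construction is required. The essential device is the scalar $tI$ in the off-diagonal block of $X_j$: being a scalar it automatically commutes with every $T_i$, and its size is calibrated to be the maximal value consistent with $X_j$ remaining a contraction.
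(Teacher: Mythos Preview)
Your proof is correct and is in fact cleaner than the paper's. The paper constructs an extension of the uniform shape
\[
X_i=\pvect{T_i & \delta_i T_i \\ 0 & (1-\delta_i)T_i},\qquad \delta_i=\min\left\{1,\frac{1-\|T_i\|^2}{\|T_i\|}\right\},
\]
and then has to verify commutativity via the algebraic identity $\delta_j+\delta_i(1-\delta_j)=\delta_i+\delta_j(1-\delta_i)$ and contractivity via an explicit $2\times 2$ spectral estimate. Your construction instead singles out the one index $j$ with $\|T_j\|<1$, places a scalar $tI$ in the off-diagonal of $X_j$, and leaves the remaining $X_i$ as block-diagonal $T_i\oplus T_i$. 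The scalar off-diagonal commutes with everything automatically, and contractivity of $X_j$ is immediate from $T_jT_j^*+t^2I\leq I$.

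What your approach buys is brevity and a small robustness gain: it works verbatim when $T_j=0$, whereas the paper's off-diagonal block $\delta_j T_j$ vanishes in that case and the extension would collapse to a direct sum. What the paper's form buys is uniformity---every $X_i$ has the same shape---which can be convenient if one wants to iterate or parametrize the construction, but that is not needed for the lemma as stated.
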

\begin{proof}
 Consider the extension
 \[ X_i=\pvect{T_i & \delta_i T_i \\ 0 & \eta_i T_i} \quad i=0,1,\dots,n \]
 where $\delta_i,\eta_i\in [0,1]$ are to be determined. We want $X$ to be in $\frk{C}_n$. Note that
 \[ X_iX_j=\pvect{T_iT_j & (\delta_j+\delta_i\eta_j)T_iT_j \\ 0 & \eta_i\eta_j T_iT_j}. \]
 and therefore $X_iX_j=X_jX_i$ when either $\delta_j+\delta_i\eta_j=\delta_i+\delta_j\eta_i$ or $T_iT_j=0$ for each $i,j$. It suffices to set $\eta_i=1-\delta_i$ for each $i$.
 
 Observe now that
 \[ X_i^*X_i=\pvect{T_i^*T_i & \delta_i T_i^*T_i\\ \delta_i T_i^*T_i & (\delta_i^2+\eta_i^2)T_i^*T_i} \]
 Setting $\beta_i:=1+\delta_i^2+\eta_i^2$, we easily see
 \begin{equation}\label{GR:L2:Eq1}
     \|X_i\|^2\leq \frac{1}{2}\left[\beta_i+\sqrt{\beta_i^2-4\eta_i^2}\right]\|T_i\|^2.
 \end{equation}
 To conclude the proof, we show that the $\delta_i$ can be chosen so that the right-hand side of \eqref{GR:L2:Eq1} is at most 1 for each $i$. This is equivalent to insisting
 \[ \beta_i-\|T_i\|^2\eta_i^2\leq \frac{1}{\|T_i\|^2} \]
 or equivalently
 \begin{equation*}
 \delta_i^2+(1-\|T_i\|^2)(\eta_i^2-1)\leq \|T_i\|^2+\|T_i\|^{-2}-2=\left\{\frac{1-\|T_i\|^2}{\|T_i\|}\right\}^2. \end{equation*}
 Since $(1-\|T_i\|^2)(\eta_i^2-1)\leq 0$, fix
  \[\delta_i = \min\left\{1,\frac{1-\|T_i\|^2}{\|T_i\|}\right\}. \]
 As $\|T_i\|<1$ for some $i$ we have $\delta_i>0$.
\end{proof}

\section{Parrot's Example}

Parrott provided the first example of a triple of commuting contractions which has no commuting coisometric extension (\cite{Par1}; see also \cite[p.~23]{SzNagy1}). Let $U_1,\dots,U_n$ be an arbitrary $n$-tuple of unitaries operators on a Hilbert space $\mc{H}$, and define
\begin{equation}\label{PE:Eq1}
 T_i:=\pvect{0 & 0 \\ U_i & 0} \quad i=1,2,\dots,n
\end{equation}
acting on $\mc{H}\oplus\mc{H}$. It is easily checked that the $T_i$ are commuting partial isometries, and so $T\in\frk{C}_n$. When the $U_i$ do not commute ``enough'', then $T$ has no extension to an $n$-tuple of commuting coisometries. In particular, if for some $i\neq j$ the commutator $[U_n^{-1}U_i,U_n^{-1}U_j]$ does not vanish, then $T$ has no coisometric extension (here and elsewhere $[X,Y]=XY-YX$). We refer the reader to \cite[p.~23]{SzNagy1} for details in the case $n=3$. A similar criterion determines when $T$ is extremal.

\begin{proposition}
 Let $T$ denote the Parrott $n$-tuple defined by unitaries $U_1,\dots,U_n$ acting on a Hilbert space $\mc{H}$. Then $T$ is extremal if and only if
 \[ \bigcap_{i,j=1}^n \ker[U_n^{-1}U_i,U_n^{-1}U_j]=\{0\}. \]
\end{proposition}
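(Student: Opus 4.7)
My plan is to represent an arbitrary extension $X=(X_1,\ldots,X_n)\in\frk{C}_n$ of the Parrott tuple $T$ in block form and read off the obstruction to triviality from its commutation relations.

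First I would decompose the extended Hilbert space as $(\mc{H}\oplus\mc{H})\oplus\mc{L}$ and write each operator as
\[ X_i=\pvect{T_i & A_i \\ 0 & B_i}. \]
The extension is trivial precisely when every $A_i=0$. The inequality $I-X_iX_i^*\geq 0$ forces $T_iT_i^*+A_iA_i^*\leq I$, and since $T_iT_i^*=\diag(0,I_{\mc{H}})$, we conclude $\ran A_i\sbse\mc{H}\oplus\{0\}$, so $A_i=\pvect{C_i \\ 0}$ for some $C_i:\mc{L}\to\mc{H}$.

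Next I would impose $X_iX_j=X_jX_i$. The diagonal blocks reduce to $T_iT_j=0$ (automatic) and $B_iB_j=B_jB_i$, while the off-diagonal block separates, thanks to the previous reduction, into two identities:
\[ U_iC_j=U_jC_i \qtextq{and} C_iB_j=C_jB_i. \]
Setting $W_i:=U_n^{-1}U_i$ and pre-multiplying the first identity by $U_n^{-1}$ gives $W_iC_j=W_jC_i$. Taking $i=n$ shows $C_j=W_jC_n$ for every $j$, and substituting back yields $[W_i,W_j]C_n=0$ for all $i,j$; equivalently,
\[ \ran C_n\sbse\bigcap_{i,j=1}^n\ker[U_n^{-1}U_i,U_n^{-1}U_j]. \]

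For the ``if'' direction, vanishing of this intersection forces $C_n=0$, hence $C_j=W_jC_n=0$ and $A_j=0$ for every $j$; thus every extension of $T$ is trivial, and $T$ is extremal. For the converse, I would put $\mc{E}:=\bigcap_{i,j}\ker[U_n^{-1}U_i,U_n^{-1}U_j]$, assume $\mc{E}\neq\{0\}$, and construct the extension $\mc{L}:=\mc{E}$, $B_i:=0$, $C_i:=W_iV$, where $V:\mc{E}\to\mc{H}$ is the inclusion. The first commutation identity reduces to $[W_i,W_j]V=0$, which holds by the definition of $\mc{E}$; the second is automatic from $B_i=0$; and contractivity $\|X_i\|\leq 1$ follows quickly from $VV^*\leq I$ together with the unitarity of the $U_i$. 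Since $V\neq 0$, this extension is not a direct sum, and $T$ is not extremal.

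The step I expect to require the most care is the contractivity reduction that confines $\ran A_i$ to $\mc{H}\oplus\{0\}$: without it the off-diagonal commutation identity would involve extra components of $A_j$ lying in $\{0\}\oplus\mc{H}$, the clean separation into the two identities $U_iC_j=U_jC_i$ and $C_iB_j=C_jB_i$ would be lost, and the role of the particular commutators $[U_n^{-1}U_i,U_n^{-1}U_j]$ would be obscured.
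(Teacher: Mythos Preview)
Your proposal is correct and follows essentially the same argument as the paper: the paper uses a $3\times 3$ block decomposition of the extended space $\mc{H}\oplus\mc{H}\oplus\mc{L}$ from the outset, while you use a $2\times 2$ block and then split $A_i$, but the contractivity reduction, the derivation of $U_iC_j=U_jC_i$, the substitution $C_j=W_jC_n$ yielding $[W_i,W_j]C_n=0$, and the explicit non-trivial extension in the converse direction are all the same.
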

\begin{proof}
 An extension $X$ of $T$ takes the form
 \[ X_i=\pvect{0 & 0 & A_i \\ U_i & 0 & B_i \\ 0 & 0 & C_i}, \quad i=1,2,\dots,n. \]
 As $X_i$ is contractive, $\pvect{U_i^*\\B_i^*}$ is also a contraction, and so $B_i= 0$ for each $i$. Therefore
 \[ X_i^*X_i=\pvect{1 & 0 & 0 \\ 0 & 0 & 0 \\ 0 & 0 & A_i^*A_i+C^*_iC_i} \]
 and so $X_i$ is a contraction if and only if $A_i^*A_i+C_i^*C_i\leq 1$. Since
 \[ X_iX_j=\pvect{0 & 0 & A_iC_j \\ U_iU_j & 0 & U_iA_j \\ 0 & 0 & C_iC_j} \quad i,j=1,2,\dots n, \]
 commutivity requires
 \[ A_iC_j=A_jC_i, \quad U_iA_j=U_jA_i, \quad [C_i,C_j]=0 \]
 for all $i,j$. Using the notation $W_j=U_n^{-1}U_j$ for $j=1,2,\dots,n$, the second of these implies $A_j=W_jA_n$ for each $j$ and therefore
 \[  [W_i,W_j]A_n=0, \quad i,j=1,2,\dots,n. \]
 Define $\mc{K}=\bigcap_{i,j}\ker[W_i,W_j]$. Thus $\ran A_n\sbse \mc{K}$. If $\mc{K}=0$, then $A_n=0$ hence $A_i=0$ for each $i$ and therefore every extension $X$ is by direct sum; $T$ is extremal.
 
 Conversely, suppose that $\mc{K}\neq \{0\}$. Let $A_n$ denote the canonical embedding of $\mc{K}$ into $\mc{H}$. Then
 \[ X_i=\pvect{0 & 0 & W_iA_n \\ U_i & 0 & 0 \\ 0 & 0 & 0}, \quad i=1,2,\dots,n \]
 defines a non-trivial extension of $T$ in $\frk{C}_n$.
\end{proof}

\begin{remark}
    The conditions given by \eqref{PE:Eq1} seem to unfairly favor $U_n$. The favoritism is in fact superficial. While this can be seen as a corollary of the preceding proposition, one may also directly show that $\bigcap_{i,j=1}^n\ker[U_n^{-1}U_i,U_n^{-1}U_j]=\{0\}$ if and only if $\bigcap_{i,j=1}^n\ker[U_k^{-1}U_i,U_k^{-1}U_j]=\{0\}$ for some $k=1,\dots,n$.
\end{remark}

\begin{remark}
    It should be noted that the condition in \eqref{PE:Eq1} can indeed be satisfied by some tuple of operators. Consider $g_1,g_2$, generators of the free group on two elements, acting by translation on $\ell^2(\mathbb{F}_2)$. Now consider the triple of unitaries $(g_1,g_2,1)$. The intersection in \eqref{PE:Eq1} reduces to $\ker[g_1,g_2]=\{0\}$.
\end{remark}

\section{The Crabb-Davie Example}

While Parrot's example has no coisometric extension in $\frk{C}_n$, it nevertheless obeys the von Neumann inequality. That is, if $p$ is an analytic polynomial in three variables, then
\[ \|p(T_1,T_2,T_3)\|\leq \sup\{|p(z_1,z_2,z_3)|: 0\leq |z_1|,|z_2|,|z_3|<1\}. \]
However, there are triples in $\frk{C}_3$ that do not satisfy the von Neumann inequality. A construction of Crabb and Davie \cite[p.~23]{Pisier} provides an example which consists of the three $8\times 8$-matrices,
\[ T_i=\pvect{0         &            &           &           &          &          &          &          \\
              \delta_{i1} &            &           &           &          &          &          &          \\
              \delta_{i2} &            &           &           &          &          &          &          \\
              \delta_{i3} &            &           &           &          &          &          &          \\
                        & -\delta_{i1} & \delta_{i3} & \delta_{i2} &          &          &          &          \\
                        & \delta_{i3} & -\delta_{i2} & \delta_{i1} &          &          &          &          \\
                        & \delta_{i2} & \delta_{i1} & -\delta_{i3} &          &          &          &          \\
                        &            &           &           & \delta_{i1} & \delta_{i2} & \delta_{i3} & 0 } \quad i=1,2,3, \]
where every non-specified entry is 0. These are commuting partial isometries and
\[ T_iT_i^* = \diag(0,\delta_{i1},\delta_{i2},\delta_{i3},1,1,1,1). \]

\begin{proposition}
 The Crabb-Davie triple is extremal.
\end{proposition}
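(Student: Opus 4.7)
The plan is to show that any extension of $T$ in $\frk{C}_3$ is trivial. Write an arbitrary such extension on $\CC^8 \oplus \mc{E}$ as
\[ X_i = \pvect{T_i & A_i \\ 0 & C_i}, \qquad i=1,2,3, \]
with $A_i \colon \mc{E} \to \CC^8$. The goal is to force each $A_i=0$, for then $\CC^8$ reduces $X$ and the extension is a direct sum. From $X_iX_i^* \leq I$ we at least have $T_iT_i^* + A_iA_i^* \leq I$, and since $T_iT_i^*$ is a diagonal projection this pins $\ran A_i$ inside $\ker T_i^*$; reading off from the diagonal $(0,\delta_{i1},\delta_{i2},\delta_{i3},1,1,1,1)$, the matrix $A_i$ is supported on rows $\{1\}\cup(\{2,3,4\}\setminus\{i+1\})$ and is zero in rows $5,6,7,8$.

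Next I would exploit the natural gradation of $\CC^8$: put $e_1$ in level~$0$, $e_2,e_3,e_4$ in level~$1$, $e_5,e_6,e_7$ in level~$2$, and $e_8$ in level~$3$. Each $T_i$ carries level $k$ into level $k+1$, with $T_ie_1=e_{i+1}$, and the restriction $T_i|_{\text{level }1}\colon\Span(e_2,e_3,e_4)\to\Span(e_5,e_6,e_7)$ is an explicit signed permutation whose sign pattern I would record at the outset. The commutation $X_iX_j = X_jX_i$ reduces to
\[ T_iA_j - T_jA_i = A_jC_i - A_iC_j. \]
The right side has range in levels~$0$ and~$1$, so projecting this equation onto level~$2$ yields, for $i\neq j$,
\[ \bigl(T_i|_{\text{level }1}\bigr)\bigl(A_j|_{\text{level }1}\bigr) = \bigl(T_j|_{\text{level }1}\bigr)\bigl(A_i|_{\text{level }1}\bigr). \]
Running this $3\times 3$ system for each of the three pairs $(i,j)\in\{(1,2),(1,3),(2,3)\}$, the signs of the three permutations interlock so that the combined equations collapse to identities of the form $a=-a$, forcing every level-$1$ row of every $A_i$ to vanish.

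At this stage $A_i v = \langle v,a_{i,1}\rangle e_1$ for some $a_{i,1}\in\mc{E}$, and the commutator equation becomes
\[ \langle v,a_{j,1}\rangle e_{i+1} - \langle v,a_{i,1}\rangle e_{j+1} = \bigl(\langle v,C_i^*a_{j,1}\rangle - \langle v,C_j^*a_{i,1}\rangle\bigr) e_1. \]
The left side lies in $\Span(e_{i+1},e_{j+1})$ while the right lies in $\Span(e_1)$, so for $i\neq j$ the linear independence of $e_{i+1}$ and $e_{j+1}$ forces $a_{i,1}=a_{j,1}=0$. Hence $A_i=0$ for every $i$, so $X$ is a direct sum extension and $T$ is extremal. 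The main obstacle will be the bookkeeping in the level-$2$ step: it is precisely the interlocking signs of the three matrices $T_i|_{\text{level }1}$ that collapse the three pairwise identities rather than leaving residual parameters, so writing these matrices out once at the beginning and running the elimination in parallel is the natural organization.
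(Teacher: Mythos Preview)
Your argument is correct and follows essentially the same route as the paper: the contractivity constraint confines $A_i$ to rows $1$ through $4$, and then the commutation relations first force the level-$1$ rows (the paper's $\phi_i,\psi_i$) to vanish via the sign collapse in rows $5$--$7$, after which the level-$0$ row (the paper's $\eta_i$) dies from rows $2$--$4$. Your gradation language is a clean way to organize what the paper does by writing out the three $8$-vectors in \eqref{CD:Eq1} and reading off rows.
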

\begin{proof}
 Let $X\in\frk{C}_3$ be an extension of $T$ so
\[ X_i=\pvect{T_i & A_i \\ 0 & B_i},  \quad i=1,2,3,\]
where $A_i\in\mc{L}(\mc{H},\CC^8)$ and $B_i\in\mc{L}(\mc{H})$ for some Hilbert space $\mc{H}$. In order for $X_i$ to be a contraction, we need in particular that
\[ T_iT_i^*+A_iA_i^*\leq 1, \quad i=1,2,3. \]
This implies that $\ran A_i\sbse \ran(1-T_iT_i^*)$. Since
\[ 1-T_iT_i^*=\diag(1,1-\delta_{1i},1-\delta_{2i},1-\delta_{3i},0,0,0,0), \]
we can express the $A_i$ as column vectors whose entries linear functionals on $\mc{H}$;
\[ A_1=\pvect{\eta_1 \\ 0 \\ \phi_1 \\ \psi_1 \\ 0 \\ 0 \\ 0 \\ 0}, \quad A_2=\pvect{\eta_2 \\ \phi_2 \\ 0 \\ \psi_2 \\ 0 \\ 0 \\ 0 \\ 0}, \quad A_3=\pvect{\eta_3 \\ \phi_3 \\ \psi_3 \\ 0 \\ 0 \\ 0 \\ 0 \\ 0}. \]

Notice that
\[ X_iX_j=\pvect{T_iT_j & T_iA_j+A_iB_j \\ 0 & B_iB_j}, \quad i=1,2,3. \]
Therefore $[X_i,X_j]=0$ for all $i,j$ if and only if
\[ [B_i,B_j]=0 \quad T_iA_j+A_iB_j=T_jA_i+A_jB_i. \]
The second series of equations can be expressed as equalities of certain column vectors;
{\scriptsize
\begin{equation}\label{CD:Eq1}
 \pvect{\eta_1\of B_2 \\ \eta_2 \\ \phi_1\of B_2 \\ \psi_1\of B_2 \\ -\phi_2 \\ \psi_2 \\ 0 \\ 0}= \pvect{\eta_2\of B_1 \\ \phi_2\of B_1 \\ \eta_1 \\ \psi_2\of B_1 \\ \psi_1 \\ -\phi_1 \\ 0 \\ 0}, \qquad \pvect{\eta_3\of B_1 \\ \phi_3\of B_1 \\ \psi_3\of B_1 \\ \eta_1 \\ \phi_1 \\ 0 \\ -\psi_1 \\ 0}=\pvect{\eta_1\of B_3 \\ \eta_3 \\ \phi_1\of B_3 \\ \psi_1\of B_3 \\ -\phi_3 \\ 0 \\ \psi_3 \\ 0}, \qquad \pvect{\eta_2\of B_3 \\ \phi_2\of B_3 \\ \eta_3 \\ \psi_2\of B_3 \\ 0 \\ -\psi_3 \\ \phi_3 \\ 0}=\pvect{\eta_3\of B_2 \\ \phi_3\of B_2 \\ \psi_3\of B_2 \\ \eta_2 \\ 0 \\ \phi_2 \\ -\psi_2 \\ 0}
\end{equation}
}By stringing together equations from the 5th through 7th rows of \eqref{CD:Eq1}, we find
\[ \psi_1=-\phi_2=\psi_3=-\psi_1, \]
\[ \psi_2=-\phi_1=\phi_3=-\psi_2.\]
Thus $\psi_i=\phi_i=0$ for all $i$. From the 2nd through 4th rows of \eqref{CD:Eq1}
\[ \eta_1=\phi_1\of B_2=0, \quad \eta_2=\phi_2\of B_1=0, \quad \eta_3=\phi_3\of B_1=0. \]
Thus $A_i=0$ for each $i$, and so $X$ is a trivial extension of $T$.
\end{proof}

\begin{remark}
    We take a moment to show that the Crabb-Davie example does not satisfy the conditions of Lemma \ref{GR:L1}. Note that $\Ran T=\{0\}\oplus \CC^{\oplus 7}$ and $\Ker T=\{0\}^{\oplus 7}\oplus\CC$. Therefore $(\Ran T)^\bot\cap\Ker T=\{0\}$.
\end{remark}

\section{The Varopoulos Example}

We need to establish some notation. Let $J$ be a set, and given $\alpha\in J$ and $x\in\ell^2(J)$, let $x(\alpha)$ denote the $\alpha$-component of $x$. Noting that a linear operator from $\CC$ to $\ell^2(J)$ is uniquely determined by its value at 1, we view the elements of $\ell^2(J)$ as bounded operators $\CC\to\ell^2(J)$ and the linear functionals on $\ell^2(J)$ as bounded operators $\ell^2(J)\to\CC$, the operator adjoint $x\mapsto x^*$ mapping between these. Given $x,y\in\ell^2(J)$ we may now write $xy^*$ for the rank one operator $h\mapsto \inner{h}{y}x$, and $y^*x=\inner{x}{y}$. Another operation we define on $\ell^2(J)$ is the conjugation
\[ \cc{x}(\alpha)=\cc{x(\alpha)}, \quad \alpha \in J. \]
Note that $\cc{x}^*y=\cc{y}^*x$.

Another triple that fails to obey the von Neumann inequality is provided by Varopoulos \cite[p.~86]{Pisier}. Define the Hilbert space $\mc{H}=\CC\oplus \ell^2(J)\oplus \CC$ and let $x_1,x_2,x_3$ be in the unit ball of $\ell^2(J)$. The Varopoulos example consists of the three operators $T_1,T_2,T_3\in\mc{L}(\mc{H})$ defined by
\begin{equation}\label{TVE:DefOfT}
    T_i=\pvect{0 & 0 & 0 \\ x_i & 0 & 0 \\ 0 & \cc{x}_i^* & 0}, \quad i=1,2,3.
\end{equation}
The $T_i$ commute because $\cc{x}_i^*x_j=\cc{x}^*_jx_i$ for $i,j=1,2,3$. The identity
\begin{equation}\label{TVE:Eq1}
    T_iT_i^*=\diag(0,x_ix_i^*,\|x_i\|^2)
\end{equation}
implies $\|T_i\|=\|x_i\|\leq 1$ for each $i$, and so $T\in\frk{C}_3$.

While each $J$ and each triple $x_1,x_2,x_3$ in the unit ball of $\ell^2(J)$ define a $T$ in $\frk{C}_3$, only certain choices of $J$ and $(x_1,x_2,x_3)$ produce an extremal triple. Before providing triples that are in the boundary, we show that we may limit our attention to certain special cases. One restriction we immediately make is to limit ourselves to $\|x_i\|=1$ for each $i$. Indeed, Lemma \ref{GR:L2} and \eqref{TVE:Eq1} imply that $T$ cannot be extremal if $\|x_i\|<1$ for some $i$. Under this restriction $\eqref{TVE:Eq1}$ shows each $T_i$ is a partial isometry.

Another immediate restriction we make is on the size of $\ell^2(J)$. Define the subspace $\mc{R}\sbse\ell^2(J)$ by
\begin{equation}\label{TVE:DefOfR}
    \mc{R}=\Span\{x_1,x_2,x_3,\cc{x}_1,\cc{x}_2,\cc{x}_3\}.
\end{equation}
If $\mc{R}$ is a proper subspace of $\ell^2(J)$ then $T$ cannot be extremal. Indeed,
\[ \ker T_i=\{0\}\oplus \{\cc{x}_i\}^\bot\oplus \CC \qtextq{and} \ran T_i=\{0\}\oplus \CC x_i\oplus \CC \]
for each $i$, and therefore
\[ (\Ran T)^\bot\cap\Ker T= \{0\}\oplus \mc{R}^\bot\oplus \{0\}. \]
Applying Lemma \ref{GR:L1} we find that $T$ cannot be extremal if $\mc{R}^\bot\neq \{0\}$. Therefore we limit our attention to the case $\mc{R}=\ell^2(J)$. With $r=\dim\mc{R}$, we note that $\mc{R}$ is finite dimensional and we fix an orthonormal basis $e_1,\dots,e_r\in\mc{R}$ with the property that $\cc{e_i}=e_i$ for each $i$. 

Any extension $X\in\frk{C}_3$ of $T$ takes the form
\begin{equation}\label{TVE:ExtDef}
    X_i=\pvect{0 & 0 & 0 & \phi_i \\ x_i & 0 & 0 & C_i \\ 0 & \cc{x}_i^* & 0 & 0 \\ 0 & 0 & 0 & B_i}
\end{equation}
acting on $\mc{H}\oplus\mc{M}$ for some Hilbert space $\mc{M}$ where $C_i\in\mc{L}(\mc{M},\mc{R})$, $B_i\in\mc{L}(\mc{M})$, and $\phi_i$ is a linear functional on $\mc{M}$ for $i=1,2,3$. The third entry of the fourth column is 0 because $\|X_i\|\leq 1$ and $\|x_i\|=1$ for each $i$. A second consequence of the inequality $\|X_i\|\leq 1$ is
\[ C_iC_i^*\leq 1-x_ix_i^*, \quad i=1,2,3. \]
This implies $x_i^*C_i=0$ for each $i$. The condition that $X_iX_j=X_jX_i$ is equivalent to requiring
\[ \phi_i\of B_j=\phi_j\of B_i, \quad B_iB_j=B_jB_i \]
\begin{equation}\label{TVE:CC1}
    x_i\phi_j+C_iB_j=x_j\phi_i+C_jB_i,
\end{equation}
\begin{equation}\label{TVE:CC2}
    \cc{x}_i^*C_j=\cc{x}_j^*C_i
\end{equation}
for $i,j=1,2,3$, where $x_i\phi_j$ denotes the map $h\mapsto \phi_j(h)x_i$. Observe that  $x_i^*C_i=0$ implies $C_i^*x_i=0$, and that $\cc{x}_i^*C_j=\cc{x}_j^*C_i$ is equivalent to $C_j^*\cc{x}_i=C_i^*\cc{x}_j$ for all $i$ and $j$.

Define $h^{(i)}_j=C_i^*e_j$ and write $x_i=\sum_{\ell=1}^r a_{i\ell}e_\ell$ for $i=1,2,3$ and $j=1,2,\dots,r$. Then $C_i^*x_i=0$ ($i=1,2,3$) and \eqref{TVE:CC2} become a homogeneous system of linear equations in the vectors $h^{(i)}_\ell$.
\begin{equation}\label{TVE:Systm1}
    \begin{array}{cc}
        a_{i1}h^{(i)}_1+ \dots + a_{ir}h^{(i)}_r =0 & (i=1,2,3) \\
        \cc{a_{i1}}h^{(j)}_1+\dots+\cc{a_{ir}}h^{(j)}_r = \cc{a_{j1}}h^{(i)}_1+\dots+\cc{a_{jr}}h^{(i)}_r & (i,j=1,2,3)
    \end{array}
\end{equation}
Let $\Lambda$ denote the $6\times 3r$ scalar matrix representing this linear system;
\begin{equation*}
        \Lambda=\begin{pmatrix}
        a_{11} & \cdots & a_{1r} &        &       &        &        &       &        \\
               &        &        & a_{21} & \dots & a_{2r} &        &       &        \\
               &        &        &        &       &        & a_{31} & \dots & a_{3r} \\
        \cc{a_{21}} & \dots & \cc{a_{2r}} & -\cc{a_{11}}   & \dots & -\cc{a_{1r}} &              &       &              \\
        \cc{a_{31}} & \dots & \cc{a_{3r}} &                &       &              & -\cc{a_{11}} & \dots & -\cc{a_{1r}} \\
                    &       &             & \cc{a_{31}}    & \dots & \cc{a_{3r}}  & -\cc{a_{21}} & \dots & -\cc{a_{2r}}
       \end{pmatrix}
    \end{equation*}
    where every non-specified entry is 0.

\begin{lemma}\label{TVE:Lem}
    If $\Lambda$ has a non-trivial kernel, then $T$ is not extremal.
\end{lemma}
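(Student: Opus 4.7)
The plan is to turn any nonzero element of $\ker\Lambda$, interpreted as scalars $(h^{(i)}_\ell)_{i=1,2,3;\,\ell=1,\dots,r}\in\CC^{3r}$, into a one-dimensional non-trivial extension $X\in\frk{C}_3$ of $T$ of the form \eqref{TVE:ExtDef}. I will take $\mc{M}=\CC$, set $\phi_i=0$ and $B_i=0$ for $i=1,2,3$, and define $C_i\colon\CC\to\mc{R}$ by $C_i(1)=z_i:=\sum_\ell \cc{h^{(i)}_\ell}\,e_\ell$, so that $C_i^*e_\ell=h^{(i)}_\ell$, matching the identification used in \eqref{TVE:Systm1}.

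With these choices, the relations $[B_i,B_j]=0$, $\phi_i\of B_j=\phi_j\of B_i$, and \eqref{TVE:CC1} are automatic. The remaining conditions $C_i^*x_i=0$ and \eqref{TVE:CC2} (equivalently $C_j^*\cc{x}_i=C_i^*\cc{x}_j$), written out in the scalars $h^{(i)}_\ell$, are precisely the six rows of the linear system given by $\Lambda$. Thus the fact that $(h^{(i)}_\ell)$ lies in $\ker\Lambda$ is exactly what is needed to make the $X_i$ commute and to satisfy the necessary consequence $x_i^*C_i=0$ of contractivity.

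To guarantee contractivity, I will first rescale the chosen kernel vector so that $\sum_\ell |h^{(i)}_\ell|^2\leq 1$ for each $i$; this is harmless since $\ker\Lambda$ is a linear subspace. Because $B_i=\phi_i=0$, a direct computation yields $X_iX_i^*=\diag(0,\,x_ix_i^*+C_iC_i^*,\,1)$. The rank-one operator $C_iC_i^*=z_iz_i^*$ has range $\CC z_i$, which is orthogonal to $x_i$ by $C_i^*x_i=0$, and has operator norm $\|z_i\|^2\leq 1$ by the rescaling. Consequently $x_ix_i^*+C_iC_i^*\leq 1$ on $\mc{R}$, and hence $\|X_i\|\leq 1$.

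Finally, because $\phi_i=0$ throughout, $\mc{H}$ reduces $X$ if and only if $C_i=0$ for every $i$; the nonzero kernel vector forces some $C_i\neq 0$, so $X$ is not an extension by direct sum. The main work, such as it is, lies in carefully translating the operator equations \eqref{TVE:CC2} and $C_i^*x_i=0$ into the scalar equations of \eqref{TVE:Systm1} via the identification $h^{(i)}_\ell=C_i^*e_\ell$; this is bookkeeping rather than a genuine obstacle, once the conventions for conjugation and the self-conjugate basis $\cc{e_\ell}=e_\ell$ are fixed.
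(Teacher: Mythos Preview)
Your proof is correct and follows essentially the same approach as the paper: take $\mc{M}=\CC$, set $\phi_i=B_i=0$, and use a (rescaled) nonzero kernel vector of $\Lambda$ to define the $C_i$, yielding a non-trivial commuting contractive extension. Your treatment of contractivity is in fact more explicit than the paper's (which simply rescales to $\|C_i\|\leq 1$ and leaves the orthogonality argument implicit); the only slip is that $X_iX_i^*=\diag(0,\,x_ix_i^*+C_iC_i^*,\,1,\,0)$ has four diagonal blocks, not three, but this is harmless.
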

\begin{proof}
    Assume that $\Lambda$ has non-trivial kernel and set $\mc{M}=\CC$, $\phi_i=0$, $B_i=0$ for $i=1,2,3$. Then the commutivity of $X$ is determined entirely by \eqref{TVE:CC2} and contractivity entirely by $C_iC_i^*+x_ix_i^*\leq 1$ for each $i$. As $\Lambda$ has a non-trivial kernel and $\mc{M}=\CC$, there is a non-zero solution to \eqref{TVE:Systm1}. Thus $C_i=\sum_{j=1}^r e_jh^{(i)*}_j$ is non-zero. Since the kernel of $\Lambda$ is linear, we may assume $\|C_i\|\leq 1$.
\end{proof}

\begin{proposition}
    The Varopoulos triple $T$ is extremal if and only if $\dim\mc{R}=2$ and $\ker\Lambda=\{0\}$.
\end{proposition}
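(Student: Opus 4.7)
The plan is to treat the two implications separately, using Lemma \ref{TVE:Lem} for most of the forward direction and a direct analysis of \eqref{TVE:CC1}--\eqref{TVE:CC2} for the backward direction.

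For the reverse direction, I assume $\dim\mc R=2$ and $\ker\Lambda=\{0\}$ and take an arbitrary extension $X$ of $T$ of the form \eqref{TVE:ExtDef}. The identities $x_i^*C_i=0$ and $\cc{x}_i^*C_j=\cc{x}_j^*C_i$ hold for every such $X$ and produce the linear system \eqref{TVE:Systm1}; triviality of $\ker\Lambda$ forces $C_i=0$ for every $i$. With $C_i=0$, condition \eqref{TVE:CC1} collapses to $x_i\phi_j=x_j\phi_i$. The pivotal observation is that under the two standing hypotheses the vectors $x_1,x_2,x_3$ cannot all be mutually proportional: if $x_i=\lambda_i x_1$ for every $i$, then a short computation (using that $r=2$ forces $x_1$ and $\cc{x}_1$ to be linearly independent, so the $2\times 2$ determinant $a_{11}\cc{a_{12}}-a_{12}\cc{a_{11}}$ is non-zero) produces an explicit non-zero vector in $\ker\Lambda$, a contradiction. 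Consequently some pair $x_i,x_j$ is linearly independent, so $x_i\phi_j=x_j\phi_i$ forces $\phi_i=\phi_j=0$, and then $\phi_k=0$ follows from $x_i\phi_k=x_k\phi_i=0$ and $x_i\ne 0$. Hence $X=T\oplus B$ is an extension by direct sum.

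For the forward direction, Lemma \ref{TVE:Lem} combined with extremality immediately yields $\ker\Lambda=\{0\}$, and it remains to rule out $r\ne 2$. If $r\geq 3$, the matrix $\Lambda$ is $6\times 3r$ and a dimension count gives $\dim\ker\Lambda\ge 3r-6>0$, contradicting the previous conclusion. The case $r=1$ requires a separate construction, since $\Lambda$ is then only $6\times 3$ and generically has trivial kernel. Here $\ell^2(J)=\CC$, $x_i=\lambda_i$ with $|\lambda_i|=1$, and the commutivity of $T$ is automatic. I build a non-trivial extension on $\mc H\oplus\CC$ by taking $\mc M=\CC$, $C_i=B_i=0$, and $\phi_i=\lambda_i$: the relation \eqref{TVE:CC1} reduces to $\lambda_i\lambda_j=\lambda_j\lambda_i$, and a direct matrix computation gives $X_iX_i^*=\diag(1,1,1,0)$, so each $X_i$ is a contraction. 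Since $\phi_i\ne 0$, this extension is not by direct sum, contradicting extremality.

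The step I expect to be most delicate is the $r=1$ construction, because that case is simply not covered by Lemma \ref{TVE:Lem} and demands an ad hoc extension that uses the fourth-column functionals $\phi_i$ rather than the $C_i$-entries manipulated in the lemma. A secondary technical point is the explicit verification that ``$r=2$ with all $x_i$ proportional'' forces $\ker\Lambda\ne\{0\}$; this is the pivot at which the two reverse-direction hypotheses cooperate to force triviality of every extension.
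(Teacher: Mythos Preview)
Your proposal is correct and follows essentially the same route as the paper: Lemma~\ref{TVE:Lem} plus a dimension count handle $r\ge3$, an explicit $\phi_i$-extension handles $r=1$, and for the converse $\ker\Lambda=\{0\}$ kills the $C_i$, after which $x_i\phi_j=x_j\phi_i$ and $\dim\Span\{x_1,x_2,x_3\}>1$ kill the $\phi_i$. The only difference is that you spell out why $\dim\Span\{x_1,x_2,x_3\}>1$ (the paper just says ``easily implies''), and you verify contractivity of the $r=1$ extension explicitly; both additions are sound.
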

\begin{proof}
    Suppose first that $T$ is extremal. Then Lemma \ref{TVE:Lem} implies $\ker\Lambda= \{0\}$. Since $r>2$, or rather $3r>6$, implies that $\Lambda$ has a non-trivial kernel, it follows that $r\leq 2$. In the case that $r=1$, there are $c_i\in\CC$ with $|c_i|=1$ so that $x_i=c_ix_1$ for $i=1,2,3$. Setting $\mc{M}=\CC$ and $B_i=0,C_i=0$, $\phi_i=c_i$ in \eqref{TVE:ExtDef} for each $i$ yields a non-trivial extension.
    
    Conversely, suppose $r=2$ and $\ker\Lambda=\{0\}$, in which case
    \begin{equation}\label{TVE:Lambdar2}
        \Lambda=\begin{pmatrix}
        a_{11} & a_{12} &  &  &  &  \\
                    &             & a_{21} & a_{22} &  &  \\
                    &             &       &       & a_{31} & a_{32} \\
        \cc{a_{21}} & \cc{a_{22}} & -\cc{a_{11}} & -\cc{a_{12}} &  &  \\
        \cc{a_{31}} & \cc{a_{32}} &                   &                   & -\cc{a_{11}} & -\cc{a_{12}} \\
                         &                  & \cc{a_{31}}  & \cc{a_{32}} & -\cc{a_{21}} & -\cc{a_{22}}
       \end{pmatrix}
    \end{equation}
    where every non-specified entry is 0. We have $\det\Lambda\neq 0$ and this easily implies $\dim\Span\{x_1,x_2,x_3\}>1$. Suppose $X\in\frk{C}_3$ is an extension of $T$, written as in \eqref{TVE:ExtDef}. Since $\det\Lambda\neq 0$ it follows that \eqref{TVE:Systm1} has only the trivial solution, so that $C_i^*e_j=h_j^{(i)}=0$ for all $i,j$ and therefore $C_1=C_2=C_3=0$. Then \eqref{TVE:CC1} yields $x_i\phi_j=x_j\phi_i$ for each $i,j$. The set $\{x_1,x_2,x_3\}$ does not generate a space of dimension 1, and therefore $\phi_i=0$ for $i=1,2,3$. We conclude that $X$ is trivial and thus $T$ is extremal.
\end{proof}

\begin{remark}
    We demonstrate that the condition $\ker\Lambda=\{0\}$ is not automatically satisfied when $r=2$. Using the matrix representation of $\Lambda$ given in \eqref{TVE:Lambdar2}, one finds that $\det\Lambda=1$ for the vectors $(1,0)$, $(1/\sqrt{2},1/\sqrt{2})$, $(0,1)$. On the other hand, when the vectors $x_1,x_2,x_3$ are instead $(1,0)$, $(1,0)$, $(0,1)$, one finds $\det\Lambda=0$. 
\end{remark}

\end{document}